%
%
%
%

\documentclass[12pt]{amsart}

\usepackage{amsmath, amscd, amssymb}

\newcommand{\bC}{{\mathbb C}}

\newcommand{\bZ}{{\mathbb Z}}

\newcommand{\cC}{{\mathcal C}}

\newcommand{\Mbar}{\overline{\mathcal M}}

\newcommand{\pd}{\partial}
\newcommand{\half}{\frac{1}{2}}

\newcommand{\cor}[1]{\langle {#1} \rangle}
\DeclareMathOperator{\res}{Res}
\DeclareMathOperator{\Aut}{Aut}

\newtheorem{theorem}{Theorem}[section]
\newtheorem{Theorem}{Theorem}
\newtheorem{proposition}[theorem]{Proposition}

\theoremstyle{remark}
\newtheorem{remark}{Remark}[section]

\theoremstyle{definition}

\newcommand{\bea}{\begin{eqnarray}}
\newcommand{\eea}{\end{eqnarray}}
\newcommand{\ben}{\begin{eqnarray*}}
\newcommand{\een}{\end{eqnarray*}}
\newcommand{\be}{\begin{equation}}
\newcommand{\ee}{\end{equation}}

\begin{document}

\title
{Local Mirror Symmetry for One-Legged Topological Vertex}

\author{Jian Zhou}
\address{Department of Mathematical Sciences\\Tsinghua University\\Beijing, 100084, China}
\email{jzhou@math.tsinghua.edu.cn}

\begin{abstract}
We prove the Bouchard-Mari\~no Conjecture for the framed one-legged topological vertex by deriving
the Eynard-Orantin type recursion relations from the cut-and-join equation
satisfied by the relevant triple Hodge integrals.
This establishes a version of local mirror symmetry for the local $\bC^3$ geometry with
one $D$-brane.
\end{abstract}

\maketitle

\section{Introduction}

Local  mirror symmetry relates the A-theory on an open toric Calabi-Yau threefold
with the B-theory on its mirror manifold.
Mathematically in the local A-theory one studies the local Gromov-Witten invariants.
In a series of work culminating in \cite{AKMV},
a formalism called the topological vertex based on duality with Chern-Simons theory has been developed
in the physics literature.
A mathematical theory of the toplogical vertex \cite{LLLZ} has been developed,
based on a series of earlier papers \cite{Zho1, LLZ, LLZ2}.
The B-theory in genus $0$ was originally studied by the theory of variation of Hodge structures
and Frobenius manifolds.
In higher genera,
they have been studied from various points of view,
including holomorphic anomaly equation and $tt^*$-geometry \cite{BCOV}.
Recently a new formalism  for the local B-theory on the mirror of toric Calabi-Yau threefolds has been proposed
in \cite{Mar, BKMP},
inspired by the recursion procedure of \cite{EO} discovered first in the context of matrix models.

The new formalism of the B-theory makes it possible to verify local mirror symmetry in arbitrary genera,
and this has been done in many cases in \cite{Mar} and \cite{BKMP}.
The simplest case is the one-legged framed topological vertex,
Bouchard and Mari\~no \cite{Bou-Mar} made a conjecture about it
based on the proposed new formalism of the B-theory in \cite{Mar, BKMP}.
In this paper we will present a proof of this conjecture.

Mathematically the framed topological vertex involves some special Hodge integrals associated with partitions of
positive integers.
The one-legged case appear naturally in the formal localization calculations of some open Gromov-Witten invariants
by Katz and Liu \cite{Kat-Liu}.
A closed formula for them was conjectured by Mari\~no and Vafa \cite{Mar-Vaf},
and proved mathematically  by Liu-Liu-Zhou \cite{LLZ} and Okounkov-Pandharipande \cite{Oko-Pan}.
We will refer to such Hodge integrals as triple Hodge integrals of KLMV type.

It  is well-known that a certain limit of the Mari\~no-Vafa formula is the ELSV formula
that relates linear Hodge integrals to Hurwitz numbers.
Based on this, Bouchard and Mari\~no \cite{Bou-Mar} made a corresponding conjecture for Hurwitz numbers
and linear Hodge integrals.
This conjecture has been proved by Borot-Eynard-Safnuk-Mulase \cite{BEMS}
and Eynard-Safnuk-Mulase \cite{EMS} by two different methods.
In this paper we will follow \cite{EMS} to prove the Bouchard-Mari\~no Conjecture for
triple Hodge integrals of KLMV type.
The following is our main result:

\begin{Theorem}
For triple Hodge integrals of KLMV type,
the Bouchard-Mari\~no recursion can be derived from the cut-and-join equation.
\end{Theorem}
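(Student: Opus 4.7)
The plan is to follow the strategy of Eynard-Mulase-Safnuk, whose argument for Hurwitz numbers takes the cut-and-join equation and, by Laplace transform along a spectral curve, turns it into the Eynard-Orantin recursion. The input data in our setting is the generating function of triple Hodge integrals of KLMV type appearing in the Mari\~no-Vafa formula, together with the cut-and-join equation it satisfies with respect to the framing parameter $\tau$. This cut-and-join equation is exactly the one used by Liu-Liu-Zhou in their proof of the Mari\~no-Vafa formula, and is the analogue of the cut-and-join equation for simple Hurwitz numbers. So the first step is to write this equation down cleanly in power-sum variables $p_k$ and extract its coefficients.

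Next, I would define, for each $(g,n)$ with $2g-2+n>0$, the multi-point correlation functions $W_{g,n}(x_1,\dots,x_n)$ by taking the genus-$g$, $n$-marked-point piece of the generating function, summing over partition lengths with appropriate combinatorial weights, and then converting to meromorphic differentials on the Bouchard-Mari\~no spectral curve for the one-legged framed vertex. Concretely, one performs a Laplace transform that sends the discrete variable $\mu_i$ (parts of a partition) to a continuous variable $x_i$ uniformized by the spectral curve; this is the same kind of transform Eynard-Mulase-Safnuk use, but taken with respect to a framing-dependent weight so that the ramification points of the spectral curve coincide with the critical points of the Laplace kernel.

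The central step is then to apply this Laplace transform term-by-term to the cut-and-join equation. The linear, diagonal, and quadratic differential operators on the partition side should transform respectively into the derivative of $W_{g,n}$, the principal part at the ramification points, and the two convolution-type terms (stable splitting and genus reduction) of the Eynard-Orantin recursion. One then verifies that the Laplace-transformed kernel agrees with the Bouchard-Mari\~no recursion kernel built from the Bergman bidifferential and the one-form $y\,dx$ of the spectral curve. An induction on $2g-2+n$ closes the argument.

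The main obstacle is the identification of the Laplace-transformed kernel with the Eynard-Orantin recursion kernel in the presence of the framing parameter $\tau$, which deforms the simple Lambert curve into the more intricate one-legged-vertex spectral curve. This affects both sides of the identification: the contour manipulations and the pole structure near the ramification points depend on $\tau$ in a subtle way, and one must carefully control the contribution of the three $\lambda_g$-classes in the triple Hodge integral, which generates extra $\Gamma$-function factors after Laplace transform. Matching these with the local expansion of the Bergman kernel at the ramification points of the framed spectral curve is the place where a direct adaptation of \cite{EMS} requires genuine new work, and is what occupies the technical heart of the argument.
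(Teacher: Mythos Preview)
Your proposal is a reasonable strategy and would almost certainly succeed---indeed the paper notes that Chen's preprint \cite{Che2} obtains the same result by following \cite{EMS} directly---but it is \emph{not} the route the paper takes. The paper explicitly emphasizes that its proof ``is a slight simplification of the method in \cite{EMS} because we do not use the Laplace transform.'' Instead of Laplace-transforming the partition variables, the paper works entirely with explicit changes of coordinate on the framed mirror curve: from $x$ to $y$ via $x=y^a-y^{a+1}$, then to $z=y-\frac{a}{a+1}$, then to $t=\frac{1}{(a+1)z}$, and finally to the local Airy coordinate $v$ defined by $x=\frac{a^a}{(a+1)^{a+1}}e^{-v^2/2}$. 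In these coordinates the functions $\phi_b$ become explicit polynomials $\hat{\xi}_b(t;a)$, and the local involution near the ramification point is simply $v\mapsto -v$. The Bouchard--Mari\~no recursion is first rewritten, via two residue computations on the mirror curve (Propositions 2.1--2.2), as an identity of principal parts in $z_1$, and then in $v_1$; taking the odd part in $v_1$ gives the target identity \eqref{eqn:Bou-MarInVOdd}. On the cut-and-join side, one symmetrizes (as in \cite{Gou-Jac-Vai, Che}), passes to $v$-coordinates, and then takes the principal and even-in-$v_1$ part; the left-hand side $\partial/\partial a$ drops out because it is analytic in $v_1$, and what remains is exactly \eqref{eqn:Bou-MarInVOdd} after applying $\prod_{i\ge 2} x_i\partial_{x_i}$.

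The trade-off: your Laplace-transform approach imports the full \cite{EMS} machinery and requires matching $\Gamma$-function asymptotics against the Bergman kernel in the presence of the framing, which you correctly flag as the hard part. The paper's approach replaces all of this with elementary manipulations---polynomiality of $\hat{\xi}_b$, the explicit odd/even decomposition of $t(v)$, and the observation that analytic-in-$v_1$ terms can be discarded---at the cost of doing the residue calculus on the mirror curve by hand. Neither is deeper, but the paper's is shorter and more self-contained.
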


The starting point of our proof,
as in \cite{EMS},
is the cut-and-join equation as suggested in \cite{Bou-Mar}.
This equation was originally studied for Hurwitz numbers from a combinatorial point of view \cite{Gou-Jac-Vai},
and later by a symplectic geometric point of view \cite{Li-Zha-Zhe}.
Inspired by the ELSV formula,
it was proposed by the author in \cite{Zho1} (first draft written in November 2002)
that cut-and-join equation may be used as a tool to study Hodge integrals,
in particular,
a proof of the Mari\~no-Vafa formula by establishing the cut-and-join equation
geometrically via localization on relative moduli spaces was proposed.
In collaborations with Kefeng Liu and Melissa Liu \cite{LLZ} this proposal was carried out.
Furthermore,
it was generalized in \cite{LLZ2} to obtain the two-partition Hodge integral formula conjectured
in \cite{Zho2},
and in \cite{LLLZ} to obtain the three-partition Hodge integral formula in
the mathematical theory of the topological vertex developed jointly with Jun Li.
Our proof is a slight simplification of the method in \cite{EMS} because we do not use the
Laplace transform.
After the completion of this work,
there appears a paper \cite{Che2} which gets the same result by the method of \cite{EMS}.
We believe the cut-and-join equation may play an important role in establishing
the local mirror symmetry by relating the mathematical computations in the local A-theory
with the new formalism of the local B-theory.
We will present the corresponding results for the framed topological vertex
in the two-legged and three-legged cases in a forthcoming work \cite{Zho3}.

\section{Bouchard-Mari\~no Conjecture for Triple Hodge Integrals of KLMV Type}

In this section we state and reformulate Bouchard-Mari\~no Conjecture for triple Hodge integrals of
KLMV type.

\subsection{Triple Hodge integrals of KLMV type}

For a partition $\mu = (\mu_1, \dots, \mu_{l(\mu)})$ of $d>0$,
consider triple Hodge integrals of the form:
\begin{eqnarray*}
W_{g, \mu}(a) & = & \frac{(-1)^{g+l(\mu)}}{|\Aut(\mu)|}
 (a(a+1))^{l(\mu)-1}
\prod_{i=1}^{l(\mu)}\frac{ \prod_{j=1}^{\mu_i-1} (\mu_i a+j)}{(\mu_i-1)!} \\
&& \cdot \int_{\Mbar_{g, l(\mu)}}
\frac{\Lambda^{\vee}_g(1)\Lambda^{\vee}_g(-a-1)\Lambda_g^{\vee}(a)}
{\prod_{i=1}^{l(\mu)}(1- \mu_i \psi_i)},
\een
where $\Lambda_g^{\vee}(a) = \sum_{i=0}^g (-1)^i a^{g-i} \lambda_i$.

Note these are defined for $2g-2+l(\mu) > 0$,
so that $\Mbar_{g,n}$ makes sense.
It is useful to extend the definition to the exceptional cases  $(g, l(\mu)) = (0, 1)$ and $(g, l(\mu)) = (0, 2)$
by the following conventions:
\bea
&& \int_{\Mbar_{0,1}} \frac{\Lambda_0^{\vee}(1)\Lambda_0^\vee(a) \Lambda_0^\vee(-1-a)}{1 - m \psi_1}
= m^{-2}, \label{eqn:Exception1} \\
&& \int_{\Mbar_{0,2}} \frac{\Lambda_0^{\vee}(1) \Lambda_0^\vee(a) \Lambda_0^\vee(-1-a)}{(1- m_1\psi_1)(1- m_2\psi_2)}
= \frac{1}{m_1+m_2}. \label{eqn:Exception2}
\eea

Write
\ben
&& \cor{\tau_{b_1} \cdots \tau_{b_n} T_g(a)}_g
= \int_{\Mbar_{g,n}} \prod_{i=1}^n \psi_i^{b_i} \cdot
\Lambda_g^{\vee}(1)\Lambda_g^{\vee}(a)\Lambda_g^{\vee}(-1-a).
\een
For $n\geq 1$,
following Bouchard and Mari\~no \cite{Bou-Mar}, define
\be
W_g(x_1, \dots, x_n;a) = \sum_{l(\mu) =n} z_{\mu} W_{g,\mu}(a) m_{\mu}(x_1, \dots, x_n)
\prod_{i=1}^n \frac{dx_i}{x_i},
\ee
where
\bea
&& z_\mu = |\Aut(\mu)| \cdot \prod_{i=1}^n \mu_i, \\
&& m_\mu(x_1, \dots, x_n) = \frac{1}{|\Aut(\mu)|} \sum_{\sigma \in S_n} x_{\sigma(i)}^{\mu_i}.
\eea
Then we have
\begin{multline}
W_g(x_1, \dots, x_n;a)  \\
 = (-1)^{g+n} (a(a+1))^{n-1}
\cdot \sum_{b_i=0}^{3g-3+n}   \langle \tau_{b_1} \cdots \tau_{b_n} T_g(a)  \rangle_g
\prod_{i=1}^n d \phi_{b_i}(x_i, a),
\end{multline}
for $2g-2+n > 0$,
where
\be
\phi_b(x; a)=\sum_{m=1}^{\infty}{\prod_{j=1}^{m-1} (\mu a+ j) \over (m-1)!} m^{b} x^{m}.
\ee

\subsection{Ramification point on the framed mirror curve}

Eynard-Orantin formalism \cite{EO} recursively defines a sequence of differentials on
a plane algebraic curve.
The relevant curve in this case is the framed mirror curve as suggested in \cite{BKMP}:
\be \label{eqn:Mirror}
x = y^a - y^{a+1}.
\ee
Near $(x,y) = (0,1)$,
one can invert the above equation to get:
\be
y(x)=1 - \sum_{n=1}^{\infty} \frac{\prod_{j=0}^{n-2} (na+j)}{n!} x^n.
\ee
Indeed,
set
\be \label{eqn:YinU}
y=1-u,
\ee
then by applying Lagrange inversion to the equation:
\be \label{eqn:XinU}
x = u(1-u)^a
\ee
one can get \cite{Che}:
\be
u(x)= \sum_{n=1}^{\infty} \frac{\prod_{j=0}^{n-2} (na+j)}{n!} x^n.
\ee

\begin{remark}
After a ``mirror transformation'' of the form
\begin{align}
x & \mapsto - (-1)^a x, & y & \mapsto - y,
\end{align}
equation (\ref{eqn:Mirror}) transforms into the equation:
\be
x + y^a + y^{a+1} = 0.
\ee
This is equation (4.3) for the framed mirror curve in \cite{BKMP}.
\end{remark}

The $x$-projection from this curve to $\bC$ is ramified.
We have
\begin{align}
\frac{dx}{dy} & = a y^{a-1} - (a+1)y^a, &
\frac{d^2x}{dy^2} & = a(a-1) y^{a-2} - a (a+1) y^{a-1},
\end{align}
therefore $\frac{dx}{dy} = 0$ if and only if $y = \frac{a}{a+1}$;
furthermore,
$ \frac{d^2x}{dy^2} \biggl|_{y = \frac{a}{a+1}} \neq 0$.
Hence the $x$-projection from this curve to $\bC$ is two-to-one near
\be
(x, y) = (\frac{a^a}{(a+1)^{a+1}},
\frac{a}{a+1}).
\ee
There are two points $q$ and $\tilde{q}$ on the curve near the ramification point such that
$x(q) = x(\tilde{q})$.
Write
\begin{align} \label{eqn:YinZ}
y(q) & = \frac{a}{a+1} + z, & y(\tilde{q}) & = \frac{a}{a+1} + P(z),
\end{align}
where $P(z) =  - z + o(z)$.
Then from
$$ y(q)^a - y(q)^{a+1} = y(\tilde{q})^a - y(\tilde{q})^{a+1},$$
one can find \cite{Bou-Mar}:
\be
\begin{split}
P(z) = & - z - \frac{2(a^2-1)}{3a} z^2 - \frac{4(a^2-1)^2}{9a^2} z^3 \\
& - \frac{2(a+1)^3(22a^3-57a^2+57a-22)}{135a^3}z^4 + \cdots.
\end{split}
\ee

\subsection{Bouchard-Mari\~no Conjecture}

By abuse of notations,
we will write $W(x_1, \dots, x_n;a)dx_1 \cdots dx_n$ as $W(y_1, \dots, y_n;a)$.
It was conjectured in \cite{Bou-Mar},
by making the proposal in \cite{Mar, BKMP} explicit in this situation,
that the differentials $W(y_1, \dots, y_n;a)$ can be computed recursively
by the Eynard-Orantin formalism as follows.
The initial values are
\bea
&& W_0(y;a) = \ln y(x;a) {d x \over x}, \label{eqn:W0y} \\
&& W_0 (y_1, y_2;a) = \frac{dy_1dy_2}{(y_1-y_2)^2} - {d x_1 d x_2 \over (x_1 - x_2)^2}, \label{eqn:W0y1y2}
\eea
and the recursion is given by:
\begin{multline} \label{eqn:Bou-Mar}
W_g(y_{[n]};a)
= \res_{z=0}{d E_{z}(y_1) \over \omega(z)} \\
\quad  \Big( W_{g-1} ({a \over a+1} + z,
{a \over a+1} + P(z), y_{[n]_1};a ) \\
 \quad +\sum_{\substack{g_1+g_2=g\\ A\coprod B = [n]_1}} W_{g_1}({a \over a+1} + z, y_A;a)
W_{g_2}({a \over a+1} + P(z), y_B;a)\Big),
\end{multline}
where
\bea
&& \omega(z) =  ( \ln y(z) - \ln y(P(z)) )  \cdot {d x(z) \over x(z) }, \\
&& d E_z (y_1) = {1 \over 2}\left( {1 \over y(z_1) - y(z)} - {1 \over y(z_1) - y(P(z))} \right) dy_1.
\eea
Note the differentials $\omega_z$ and $d E_z(y_1)$ are closely related to $W_0(y;a)$ and $W_0(y_1,y_2;a)$
respectively.
On the right-hand side of (\ref{eqn:Bou-Mar}),
only exceptional cases $W_0(\frac{a}{a+1}+z, y_i;a)$ and $W_0(\frac{a}{a+1}+P(z), y_i;a)$ appear.
I.e.,
$W_0(\frac{a}{a+1}+z;a)$ and $W_0(\frac{a}{a+1}+P(z);a)$ are understood as $0$ in (\ref{eqn:Bou-Mar}).
Here and below  we use the following notations:
\bea
&& [n]=\{1, \dots, n\},\\
&& [n]_i = \{1, \dots n\} - \{i\}, \\
&& [n]_{ij} = \{1, \dots, n\} - \{i, j\}.
\eea
We let $x_{[n]}$ stand for $x_1, \dots ,x_n$.

We now make the recursion relations (\ref{eqn:Bou-Mar}) more explicit.
Write $\psi_b(y;a) = \phi_b(x;a)$.
From the definitions we have
\ben
W_g(y_{[n]};a)
=(-1)^{g+n}(a(a+1))^{n-1} \sum \langle \prod_{i=1}^n \tau_{b_i} \cdot T_g(a)  \rangle_g
\prod_{i=1}^n \psi_{b_i+1}(y_i; a) \frac{dx_i}{x_i}.
\een
It follows that
\ben
&&  W_{g-1} ({a \over a+1} + z, {a \over a+1} + P(z), y_{[n]_1} ) \\
& = & (-1)^{g+n} \sum \langle \tau_b \tau_c \prod_{i=2}^n \tau_{b_i} \cdot T_{g-1}(a)  \rangle_{g-1}
\psi_{b+1}(y(z); a) \psi_{c+1}(y(P(z));a) \\
&& \cdot (a(a+1))^{n} \prod_{i=2}^n \psi_{b_i+1}(y_i, a) \frac{dx_i}{x_i}  \cdot
\frac{d x(z)}{x(z)} \cdot \frac{dx(P(z))}{x(P(z)} , \\
&& W_{g_1}({a \over a+1} + z, y_A)
= (-1)^{g_1+|A|+1} \sum \langle \tau_b \prod_{i\in A} \tau_{b_i} \cdot T_{g_1}(a)  \rangle_{g_1} \\
&& \qquad \cdot (a(a+1))^{|A|} \psi_{b+1}(y(z); a)   \frac{d x(z)}{x(z)}
 \cdot \prod_{i\in A} \psi_{b_i+1}(y_i; a)  \frac{dx_i}{x_i}, \\
&& W_{g_2}({a \over a+1} + P(z), y_B)
=(-1)^{g_2+|B|+1} \sum \langle \tau_c \prod_{i\in B} \tau_{b_i} \cdot T_{g_2}(a)  \rangle_{g_2} \\
&& \qquad \cdot (a(a+1))^{|B|} \psi_{c+1}(y(P(z)); a) \frac{d x(P(z))}{x(P(z))}
\cdot \prod_{i\in B} \psi_{b_i+1}(y_i; a)  \frac{d x_i}{x_i}.
\een
Because $x(P(z)) = x(z)$,
we actually have:
\be
\frac{dx(P(z))}{x(P(z))} = \frac{d x(z)}{x(z)}.
\ee
For the exceptional terms:
\ben
&& W_0(\frac{a}{a+1}+z, y_i)
=  \frac{d x(z)}{x(z)} \biggl( \frac{x(z) dy_i}{\frac{\pd x(z)}{\pd z}(\frac{a}{a+1}+z-y_i)^2}
- \frac{x(z)dx_i}{(x(z)-x_i)^2} \biggr).
\een
Therefore,
(\ref{eqn:Bou-Mar}) can be rewritten as the following equation:
\begin{multline} \label{eqn:Bou-MarInResidue}
\sum_{b_1 \geq 0} \langle \prod_{i=1}^n \tau_{b_i} \cdot T_g(a)  \rangle_g
\cdot \prod_{i=1}^n \psi_{b_i+1}(y_i; a) \cdot \frac{\frac{\pd x_1}{\pd y_1}}{x_1} \\
= \res_{z=0}  \frac{{1 \over 2}\left( {1 \over z_1 - z} - {1 \over z_1 - P(z)} \right) }
{ \ln y(z) - \ln y(P(z)) } \cdot \frac{dx(z)}{x(z)}  \\
\Big( a(a+1) \sum_{b,c \geq 0} \langle \tau_b \tau_c \prod_{i=2}^n \tau_{b_i} \cdot T_{g-1}(a)  \rangle_{g-1} \\
\cdot \psi_{b+1}(y(z); a) \psi_{c+1}(y(P(z));a) \prod_{i\in [n]_1} \psi_{b_i+1}(y_i; a)  \\
- \sum_{\substack{g_1+g_2=g\\ A\coprod B = [n]_1}}^{stable}
\sum_{b \geq 0} \langle \tau_b \prod_{i\in A} \tau_{b_i} \cdot T_{g_1}(a)  \rangle_{g_1}
\cdot \sum_{c \geq 0} \langle \tau_c \prod_{i\in B} \tau_{b_i} \cdot T_{g_2}(a)  \rangle_{g_2} \\
 \cdot \psi_{b+1}(y(z); a) \psi_{c+1}(y(P(z)); a) \prod_{i \in [n]_1} \psi_{b_i+1}(y_i; a)    \\
- \frac{1}{a(a+1)} \sum_{i=2}^n  \biggl( \frac{x(z) x_i \frac{\pd y_i}{\pd x_i}}{\frac{\pd x(z)}{\pd z}(\frac{a}{a+1}+z-y_i)^2}
- \frac{x(z) x_i}{(x(z)-x_i)^2} \biggr)  \\
 \cdot \sum_{b \geq 0} \langle \tau_b \prod_{j\in [n]_{1i}} \tau_{b_j} \cdot T_{g}(a)  \rangle_{g}
\cdot \psi_{b+1}(y(P(z)); a)
\cdot \prod_{j \in [n]_{1i}} \psi_{b_j+1}(y_j; a)  \\
- \frac{1}{a(a+1)} \sum_{i=2}^n   \biggl( \frac{x(z) x_i \frac{\pd y_i}{\pd x_i}}{\frac{\pd x(z)}{\pd z}(\frac{a}{a+1}+P(z)-y_i)^2}
- \frac{x(z) x_i}{(x(z)-x_i)^2} \biggr)  \\
 \cdot \sum_{b \geq 0} \langle \tau_b \prod_{j\in [n]_{1i}} \tau_{b_j} \cdot T_{g}(a)  \rangle_{g}
\cdot \psi_{b+1}(y(z); a) \cdot \prod_{j \in [n]_{1i}} \psi_{b_j+1}(y_j; a)   \Big).
\end{multline}

\subsection{Change to $t$-coordinates}
Note
\ben
&& {1 \over 2}\left( {1 \over z_1 - z} - {1 \over z_1 - P(z)} \right)
= \sum_{k=1}^{\infty} (\frac{z^k}{z_1^{k+1}} - \frac{P(z)^k}{z_1^{k+1}})
= \sum_{m \geq 1} a_m(z_1) z^m,
\een
where $a_m(z_1) \in z_1^{-2} \bC[z_1^{-1}]$.
Therefore,
each residue term on the right-hand side of (\ref{eqn:Bou-MarInResidue}) lies in $z_1^{-2}\bC[z_1^{-1}]$.
This suggests to define
\be \label{eqn:TinZ}
t = \frac{1}{(a+1)z}.
\ee
It is not hard to check that
\begin{equation}
t(x; a)
= 1 + (a+1) \sum_{n=1}^\infty \frac{\prod_{a=1}^{n-1} (na +a)}{(n-1)!} x^n
= 1+ (a+1)\phi_0(x;a),
\end{equation}
Now we have:
\be \label{eqn:XinT}
x = \frac{a^a}{(a+1)^{a+1}}
(1- \frac{1}{t}) (1 + \frac{1}{a t})^{a}.
\ee
For positive integer $a$,
this defines $x$ as a meromorphic function in $t$ with a pole at $t = 0$.

It is clear that for $b \geq 0$,
\be
\phi_b(x; a) = (x\frac{\pd}{\pd x})^{b} \frac{t(a;x) - 1}{a+1}.
\ee
By easy calculations,
\be \label{eqn:Operator}
x \frac{\pd}{\pd x} = \frac{u(1-u)}{1 - (a+1) u} \frac{\pd}{\pd u}
= \frac{t}{a+1} (t-1)(a t+1) \frac{\pd}{\pd t}.
\ee
Hence under the change of variables $x \mapsto t$,
$\phi_b(x; a)$ becomes
\be
\hat{\xi}_{b}(t; a) = D_t^b \frac{t-1}{a+1}
\ee
for $b \geq 0$,
where
\be
D_t = \frac{1}{a+1} t(t-1)(a t+1) \frac{\pd}{\pd t}.
\ee
It follows that $\hat{\xi}_b(t;a)$ is a polynomial of degree $2b+1$ in $t$,
for example,
\bea
&& \hat{\xi}_0(t; a) = \frac{t-1}{a+1}, \\
&& \hat{\xi}_1(t; a) = \frac{1}{a+1} t(t-1)(a t+1).
\eea
Actually for $b \geq 0$, $\hat{\xi}_{b+1}(t;a) \in t\bC[t]$,
therefore,
$\psi_{b+1}(y(z);a)$ lies in $z^{-1}\bZ[z^{-1}]$.
By (\ref{eqn:Operator}),
we also have
\ben
\psi_{b+1}(y;a) \frac{dx}{x}
= D_t^{b+1}t \cdot \frac{(a+1) dt}{t(t-1)(at+1)}
= - \frac{1}{a+1} \frac{\pd}{\pd t} D_t^bt \cdot \frac{dz}{z^2} \in \frac{dz}{z^2} \bC[z^{-1}].
\een
This means the left-hand side of (\ref{eqn:Bou-MarInResidue}) also lies in $z_1^{-2}\bC[z_1^{-1}]$.

One can also find explicit expressions for $\hat{\xi}_{-1}(t; a)$ and $\hat{\xi}_{-2}(t; a)$.
Note
\be
x\frac{\pd}{\pd x} \phi_{-1}(x; a) = \phi_0(x; a) = \frac{t - 1}{a+1},
\ee
therefore,
\be
 \frac{1}{a+1}  t(t-1)(a t+1) \frac{\pd}{\pd t} \hat{\xi}_{-1}(t; a)
 = \frac{t - 1}{a+1}.
\ee
Integrating once, one gets:
\be \label{eqn:Xi-1inT}
\hat{\xi}_{-1}(t; a) = \ln t - \ln (t+ \frac{1}{a}) - \ln \frac{a}{a+1},
\ee
where we have use
\begin{align}
t(x; a)|_{x=0} & = 1, &
\phi_{-1}(x; a)|_{x=0} & = 0.
\end{align}
One can rewrite (\ref{eqn:Xi-1inT}) as
\be
\hat{\xi}_{-1}(t; a) = - \ln (1-u) = - \ln y.
\ee
Similarly, one can integrate
\be
\frac{1}{a+1}t(t-1)(a t+1) \frac{\pd}{\pd t} \hat{\xi}_{-2}(t;a) = \hat{\xi}_{-1}(t;a)
\ee
to get an expression of $\hat{\xi}_{-2}(t;a)$ in $t$.

\subsection{Residue calculations}

Now we have known that every term in (\ref{eqn:Bou-MarInResidue})
is an element in $z_1^{-2}\bC[z_1^{-1}]$.

\begin{proposition} \label{prop:Residue1}
The residue
\ben
\res_{z=0} \frac{\frac{1}{z_1 - z} - \frac{1}{z_1 - P(z)} }
{\ln y(z) - \ln y(P(z)) } \cdot
\psi_{b+1}(y(z); a) \psi_{c+1}(y(P(z));a)     \frac{d x(z)}{x(z)}
\een
is the principal part in $z_1$ of
\be
\frac{\phi_{b+1}(z_1; a) \psi_{c+1}(P(z_1);a) + \psi_{b+1}(P(z_1); a) \psi_{c+1}(z_1;a)}
{\ln y(z_1) - \ln y(P(z_1)) } \cdot \frac{\frac{\pd x_1}{\pd z_1}}{x_1}.
\ee
\end{proposition}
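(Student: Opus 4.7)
The plan is to prove Proposition \ref{prop:Residue1} in two clean steps: first use the deck involution $\iota(z)=P(z)$ to fold the two simple-pole factors in $\frac{1}{z_1-z}-\frac{1}{z_1-P(z)}$ into a single $1/(z_1-z)$, and then apply a standard formal residue-to-principal-part identity.

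For the symmetrization, abbreviate $A(z):=\psi_{b+1}(y(z);a)$ and $B(z):=\psi_{c+1}(y(z);a)$ and split the integrand as $\omega_1-\omega_2$ according to the two summands in the numerator. Since $\iota$ is the deck transformation of the $x$-projection near the ramification point, it is a local biholomorphism fixing $z=0$, and hence residues at the origin are invariant under $\iota^*$. The pullbacks are immediate: $\iota^*[1/(z_1-P(z))]=1/(z_1-z)$; $\iota^*[A(z)B(P(z))]=A(P(z))B(z)$; $\iota^*[\ln y(z)-\ln y(P(z))]=-[\ln y(z)-\ln y(P(z))]$; and $\iota^*[dx(z)/x(z)]=dx(z)/x(z)$ because $d\log x$ is pulled back from the $x$-line. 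Combining these, $\res_{z=0}\omega_2$ equals the negative of a $1/(z_1-z)$-residue with $A(z)B(P(z))$ replaced by $A(P(z))B(z)$, so
\[
\res_{z=0}(\omega_1-\omega_2)=\res_{z=0}\frac{h(z)\,dz}{z_1-z},
\]
where
\[
h(z)\,dz:=\frac{A(z)B(P(z))+A(P(z))B(z)}{\ln y(z)-\ln y(P(z))}\cdot\frac{dx(z)}{x(z)}.
\]

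For the second step, I will use the elementary identity valid for any formal Laurent series $h(z)=\sum_n a_n z^n$ around $z=0$: expanding $1/(z_1-z)=\sum_{k\geq 0}z^k/z_1^{k+1}$ and collecting the coefficient of $z^{-1}$ yields
\[
\res_{z=0}\frac{h(z)\,dz}{z_1-z}=\sum_{n<0}a_n z_1^n,
\]
which is by definition the principal part of $h$ at $z_1=0$. Evaluating our $h(z_1)$ recovers exactly the expression displayed in the proposition, completing the argument. The main point requiring care is the sign bookkeeping in the involution step: one must combine the antisymmetry of $\ln y(z)-\ln y(P(z))$ under $\iota^*$ with the invariance of $dx/x$ (both immediate from $x\circ\iota=x$). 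The Laurent-series analysis established earlier, in particular $\psi_{b+1}(y(z);a)\in z^{-1}\bC[z^{-1}]$ and the first-order vanishing of $dx(z)/x(z)$ at $z=0$, guarantees that $h$ has a well-defined Laurent expansion around $z=0$, so that ``principal part'' is meaningful; Step 2 is then purely formal.
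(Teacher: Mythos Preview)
Your argument is correct and gives a genuinely different derivation from the paper's. The paper proves the proposition by contour integration: it fixes a small circle $|z|=\epsilon$ containing $0$, $z_1$, and $P(z_1)$, observes that the contour integral is analytic in $z_1$, and then uses Cauchy's theorem to write $\res_{z=0}$ as (analytic) minus $\res_{z=z_1}$ minus $\res_{z=P(z_1)}$; the last two residues are computed directly from the simple poles of $\tfrac{1}{z_1-z}$ and $\tfrac{1}{z_1-P(z)}$, using $P(P(z))=z$ and $x\circ P=x$ to handle the Jacobian at $z=P(z_1)$. Your route instead first exploits the involution $\iota=P$ to collapse the two kernel terms into a single $\tfrac{1}{z_1-z}$ with a symmetrized numerator, and then invokes the purely formal identity $\res_{z=0}\frac{h(z)\,dz}{z_1-z}=\text{(principal part of }h\text{)}(z_1)$. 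The advantage of your approach is that it is entirely formal-series based, avoids the analytic contour argument, and makes the $(b,c)$-symmetry of the answer manifest from the outset; the paper's approach is more geometric and transparently identifies the analytic remainder with an explicit contour integral. Both hinge on the same underlying facts ($x\circ P=x$, $P\circ P=\mathrm{id}$, and the polynomiality of $\psi_{b+1}(y(z);a)$ in $t=1/((a+1)z)$), just organized differently.
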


\begin{proof}
Because $\phi_{b+1}(y(z);a)$ is a polynomial
in $t = \frac{1}{(a+1)z}$,
$$f(z)dz:= \frac{\frac{1}{z_1 - z} - \frac{1}{z_1 - P(z)} }
{\ln y(z) - \ln y(P(z)) } \cdot
\psi_{b+1}(y(z); a) \psi_{c+1}(y(P(z));a)  \frac{d x(z)}{x(z)}$$
is a meromorphic form with a pole at $z=0$,
a simple pole at $z = z_1$ and a simple pole at $z= P(z_1)$.
Here we have assumed $|z_1|$ very small.
By Cauchy's residue theorem,
for sufficiently small $\epsilon > 0$,
\ben
\frac{1}{2\pi i} \int_{|z|=\epsilon} f(z) dz
= \res_{z=0} f(z) dz
+  \res_{z=z_1} f(z) dz  +  \res_{z=P(z_1)} f(z) dz.
\een
The LHS is a   function $r_1(z_1)$  analytic in $z_1$.
At the simple poles we have:
\ben
\res_{z=z_1} f(z) dz
& = & - \frac{\psi_{b+1}(y(z_1); a) \psi_{c+1}(y(P(z_1));a) } {\ln y(z_1) - \ln y(P(z_1)) }
\cdot \frac{\frac{\pd x_1}{\pd z_1}}{x_1},
\een
and
\ben
\res_{z=P(z_1)} f(z) dz
& = & \frac{ \phi_{b+1}(y(P(z_1)); a) \phi_{c+1}(y(z_1);a) }{\ln y(P(z_1)) - \ln y(z_1) }
\cdot \frac{\frac{\pd x}{\pd z}(P(z_1))}{x_1 \cdot P'(P(z_1))}.
\een
Using $P(P(z)) = z$ we get $P'(P(z)) \cdot P'(z) = 1$ and
using $x(P(z)) = x(z)$ we get
$\frac{\pd}{\pd z}x(P(z)) = \frac{\pd x}{\pd z} (P(z)) \cdot P'(z) = x'(z)$.
Combining these two identities,
one gets:
\be
\frac{ \frac{\pd x}{\pd z} (P(z))}{P'(P(z))} = x'(z).
\ee
\end{proof}

In the same fashion one can prove the following:

\begin{proposition}
The residue
\ben
&& \res_{z=0}  \frac{\left( {1 \over z_1 - z} - {1 \over z_1 - P(z)} \right) }
{ \ln y(z) - \ln y(P(z)) } \cdot \frac{dx(z)}{x(z)} \\
&&\biggl( \frac{x(z) x_i \frac{\pd y_i}{\pd x_i}}{\frac{\pd x(z)}{\pd z}(\frac{a}{a+1}+z-y_i)^2}
- \frac{x(z) x_i}{(x(z)-x_i)^2} \biggr) \cdot \phi_{b+1}(y(P(z)); a)
\een
is the principal part in $z_1$ of
\begin{multline}
\frac{1}{ \ln y(z_1) - \ln y(P(z_1)) }  \cdot \frac{\frac{\pd x_1}{\pd z_1}}{x_1} \\
\cdot \biggl[
\biggl( \frac{x_1 \frac{\pd y_1}{\pd x_1} x_i \frac{\pd y_i}{\pd x_i}}{(y_1-y_i)^2}
- \frac{x_1 x_i}{(x_1-x_i)^2} \biggr) \phi_{b+1}(y(P(z_1)); a) \\
+ \biggl( \frac{\frac{\pd y}{\pd x}(P(z_1);a) x_i \frac{\pd y_i}{\pd x_i}}{(y(P(z_1);a)-y_i)^2}
- \frac{x_1x_i}{(x_1-x_i)^2} \biggr) \phi_{b+1}(y(z_1);a) \biggr],
\end{multline}
and the residue
\ben
&& \res_{z=0}  \frac{\left( {1 \over z_1 - z} - {1 \over z_1 - P(z)} \right) }
{ \ln y(z) - \ln y(P(z)) }  \cdot \frac{dx(z)}{x(z)}\\
&& \biggl( \frac{x(z) dy_i}{\frac{\pd x(z)}{\pd z}(\frac{a}{a+1}+P(z)-y_i)^2}
- \frac{x(z)dx_i}{(x(z)-x_i)^2} \biggr) \cdot \phi_{b+1}(y(z); a)
\een
is the principal part in $z_1$ of
\begin{multline}
 \frac{1}{ \ln y(z_1) - \ln y(P(z_1)) }  \cdot \frac{\frac{\pd x_1}{\pd z_1}}{x_1}  \\
 \cdot \biggl[
\biggl( \frac{x_1\frac{\pd y_1}{\pd x_1} x_i\frac{\pd y_i}{\pd x_i}}{(y_1-y_i)^2} - \frac{x_1x_i}{(x_1-x_i)^2} \biggr) \phi_{b+1}(y(P(z_1)); a) \\
+ \biggl( \frac{x_i \frac{\pd y}{\pd x}(P(z_1);a) dy_i}{(y(P(z_1);a)-y_i)^2}
- \frac{x_1x_i}{(x_1-x_i)^2} \biggr) \phi_{b+1}(y(z_1);a) \biggr].
\end{multline}
\end{proposition}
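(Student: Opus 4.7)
The plan is to adapt the contour-integration argument from the proof of Proposition~\ref{prop:Residue1} to each of the two residues in the statement. Denote either integrand by $f(z)\,dz$. From the previous subsection, $\phi_{b+1}(y(z);a)$ is a polynomial in $1/z$ and $\phi_{b+1}(y(P(z));a)$ is a polynomial in $1/P(z)$; together with the fact that all remaining factors are meromorphic in a neighborhood of the origin (provided $y_i\neq\frac{a}{a+1}$ and $x_i\neq x(0)$), this shows that $f(z)\,dz$ has exactly three poles in a small disk $|z|\leq\epsilon$: one at $z=0$, a simple pole at $z=z_1$ (from $\frac{1}{z_1-z}$), and a simple pole at $z=P(z_1)$ (from $\frac{1}{z_1-P(z)}$, using $P(P(z_1))=z_1$). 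By Cauchy's residue theorem the contour integral $\frac{1}{2\pi i}\int_{|z|=\epsilon}f(z)\,dz$ is analytic in $z_1$, so
\[
\res_{z=0} f(z)\,dz = -\res_{z=z_1}f(z)\,dz - \res_{z=P(z_1)}f(z)\,dz
\]
modulo an analytic summand. Taking principal parts in $z_1$ reduces the statement to evaluating the two simple-pole residues on the right.

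At $z=z_1$, the factor $\frac{1}{z_1-z}$ contributes residue $-1$; direct substitution into the remaining factors, together with $y(z_1)=\frac{a}{a+1}+z_1=y_1$ and $\frac{\pd y_1}{\pd x_1}=1/x'(z_1)$, reproduces exactly the first bracket in the claim, paired with $\phi_{b+1}(y(P(z_1));a)$, multiplied by the universal prefactor $\frac{1}{\ln y(z_1)-\ln y(P(z_1))}\cdot\frac{\pd x_1/\pd z_1}{x_1}$. At $z=P(z_1)$, the factor $-\frac{1}{z_1-P(z)}$ contributes residue $1/P'(P(z_1))$. Invoking the chain-rule identities $P(P(z))=z$, $x(P(z))=x(z)$, $P'(P(z))P'(z)=1$, and $\frac{\pd x}{\pd z}(P(z))/P'(P(z))=x'(z)$ already used in Proposition~\ref{prop:Residue1}, together with $\frac{\pd y}{\pd x}(P(z_1);a)=1/\frac{\pd x}{\pd z}(P(z_1))$, collapses the leading factors again to $\frac{\pd x_1/\pd z_1}{x_1}$ and transforms the interior bracket into the second bracket of the claim, paired with $\phi_{b+1}(y(z_1);a)$. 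Reversing the overall sign per the residue identity above yields the asserted principal part for the first residue.

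The second residue in the statement is handled identically. The only difference is that inside the interior bracket one has $P(z)$ in place of $z$ and a polynomial factor $\phi_{b+1}(y(z);a)$ in place of $\phi_{b+1}(y(P(z));a)$; consequently the roles of the residues at $z=z_1$ and $z=P(z_1)$ are interchanged, but because the sum of the two bracketed terms is symmetric under this exchange, the resulting principal part coincides with the one obtained from the first residue, as the proposition asserts.

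The main obstacle is the bookkeeping at $z=P(z_1)$: the several composite chain-rule identities listed above must be combined so that the residue factorizes cleanly as $\frac{\pd x_1/\pd z_1}{x_1}$ times the expected bracket, and in particular so that the factor $\frac{\pd y}{\pd x}(P(z_1);a)$ emerges where claimed. Beyond this careful calculus exercise, no new analytic difficulty arises compared with Proposition~\ref{prop:Residue1}.
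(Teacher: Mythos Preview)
Your proposal is correct and follows precisely the route the paper intends: the paper gives no separate proof for this proposition, stating only that it is proved ``in the same fashion'' as Proposition~\ref{prop:Residue1}, and your argument is exactly that---apply Cauchy's theorem on a small circle, observe the contour integral is analytic in $z_1$, and evaluate the two simple-pole residues at $z=z_1$ and $z=P(z_1)$ using the identities $P(P(z))=z$, $x(P(z))=x(z)$, and $\tfrac{\partial x}{\partial z}(P(z))/P'(P(z))=x'(z)$.
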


By these results, we see that the Bouchard-Mari\~no recursion is equivalent to:
\begin{multline} \label{eqn:Bou-MarInZ}
\sum_{b_1, \dots b_n \geq 0} \langle \prod_{i=1}^n \tau_{b_i} \cdot T_g(a) \rangle_g \cdot
\prod_{i=1}^n\phi_{b_i+1}(y_i, a)   \cdot \frac{\frac{\pd x_1}{\pd y_1}}{x_1} \\
=   {1 \over 2} \frac{1}{ \ln y(z_1) - \ln y(P(z_1)) }   \cdot \frac{\frac{\pd x_1}{\pd y_1}}{x_1}  \\
 \sum_{b_1, \dots b_n \geq 0} \Big( a(a+1) \sum_{b,c \geq 0} \langle \tau_b \tau_c \prod_{i=2}^n \tau_{b_i} \cdot T_{g-1}(a)  \rangle_{g-1}
\cdot \prod_{i=2}^n \phi_{b_i+1}(y_i; a)  \\
\cdot [\phi_{b+1}(y(z_1); a) \phi_{c+1}(y(P(z_1));a)
+ \phi_{b+1}(y(P(z_1)); a) \phi_{c+1}(y(z_1);a) ]\\
- \sum_{\substack{g_1+g_2=g\\ A\coprod B = [n]_1}}^{stable}
\sum \langle \tau_b \prod_{i\in A} \tau_{b_i} \cdot T_{g_1}(a)  \rangle_{g_1}
\cdot \sum_{b,c \geq 0} \langle \tau_c \prod_{i\in B} \tau_{b_i} \cdot T_{g_2}(a)  \rangle_{g_2}
\cdot \prod_{i=2}^n \phi_{b_i+1}(y_i; a)  \\
 \cdot [\phi_{b+1}(y(z); a) \phi_{c+1}(y(P(z));a)
+ \phi_{b+1}(y(P(z)); a) \phi_{c+1}(y(z);a) ] \\
-2  \sum_{i=2}^n  \sum_{c \geq 0} \langle \tau_c \prod_{j\in [n]_{1i}} \tau_{b_j} \cdot T_{g}(a)  \rangle_{g}
\cdot \prod_{j \in [n]_{1i}} \phi_{b_j+1}(y_j; a)   \\
\frac{1}{a(a+1)} \biggl[
\biggl( \frac{x_1\frac{\pd y_1}{\pd x_1} x_i \frac{\pd y_i}{\pd x_i}}{(y_1-y_i)^2}
- \frac{x_1x_i}{(x_1-x_i)^2} \biggr) \phi_{b+1}(y(P(z_1)); a) \\
+ \biggl( \frac{x_1\frac{\pd y}{\pd x}(P(z_1);a) x_i \frac{\pd y_i}{x_i}}{(y(P(z_1);a)-y_i)^2}
- \frac{x_1x_i}{(x_1-x_i)^2} \biggr) \phi_{b+1}(y(z_1);a) \biggr]  \Big),
\end{multline}
modulo a term analytic in $z_1$.

\subsection{Local Airy curve coordinate}

Following \cite{EMS},
introduce a new coordinate $w$ by
\be \label{eqn:XinW}
x = \frac{a^a}{(a+1)^{a+1}} e^{-w},
\ee
so that the ramification point has $w = 0$.
By (\ref{eqn:XinT}),
\be
e^{-w} = (1-\frac{1}{t}) ( 1 + \frac{1}{a t})^a.
\ee
Therefore,
\be \label{eqn:WinT}
w = - \ln (1-\frac{1}{t}) - a \ln ( 1 + \frac{1}{a t})
= \sum_{n=2}^\infty \frac{1}{n} \biggl( \frac{1}{t^n}
+ \frac{(-1)^n}{a^{n-1}t^n} \biggr)
\ee
for $|t| > 1$.
The leading term is $\frac{a+1}{2a t^2}$.
Let
\be
\begin{split}
v & = \sqrt{ \frac{a+1}{a}}  \frac{1}{t} \biggl(1 + \frac{1}{3}(1-\frac{1}{a}) \frac{1}{t} +
\frac{1}{36} (7 - \frac{5}{a} + \frac{7}{a^2}) \frac{1}{t^2} \\
& + \frac{1}{540} (73 - \frac{48}{a} +\frac{48}{a^2} - \frac{73}{a^3})\frac{1}{t^3} \\
& + \frac{1}{12960}(1331-\frac{842}{a} + \frac{1036}{a^2} - \frac{842}{a^3}
+  \frac{1331}{a^4}) \frac{1}{t^4} + \cdots \biggr)
\end{split}
\ee
that solves $w = \half v^2$.
This expresses $v$ as analytic function in $\frac{1}{t}$ for $\frac{1}{t}$
and $v$ near $0$,
therefore,
by taking the inverse function,
$\frac{1}{t}$ is analytic in $v$:
\be \label{eqn:1/TinV}
\begin{split}
\frac{1}{t} & = \hat{v} (1 - \frac{1}{3}(1-\frac{1}{a})  \hat{v} +
\frac{1}{36} (1 -\frac{11}{a} + \frac{1}{a^2}) \hat{v}^2 \\
& + \frac{1}{270} (1 + \frac{24}{a}  -\frac{24}{a^2} -\frac{1}{a^3}) \hat{v}^3 \\
& + \frac{1}{4320} (1 - \frac{22}{a} +\frac{267}{a^2} - \frac{22}{a^3} + \frac{1}{a^4}) \hat{v}^4
+  \cdots),
\end{split}
\ee
where $\hat{v} = \sqrt{\frac{a}{a+1}} v$, hence
\be \label{eqn:TinVhat}
\begin{split}
t & = \frac{1}{\hat{v}} \biggl( 1 + \frac{1}{3} (1- \frac{1}{a}) \hat{v}
+ \frac{1}{12}(1+\frac{1}{a} + \frac{1}{a^2})\hat{v}^2 \\
& + \frac{1}{135} (2 + \frac{3}{a} - \frac{3}{a^2} - \frac{2}{a^3}) \hat{v}^3 \\
& + \frac{1}{864} (1 + \frac{2}{a} + \frac{3}{a^2} + \frac{2}{a^3} + \frac{1}{a^4}) \hat{v}^4
+ \cdots \biggr)
\end{split}
\ee
is meromorphic in $v$ with a simple pole at $v = 0$.

The involution $P$ in the $z$-coordinate (and the involution $p$ in the $t$-coordinate)
becomes simply
\be
v(p(t)) = - v(t).
\ee
In other words,
\be
p(t) = t(-v; a).
\ee

Because of (\ref{eqn:TinZ}) and (\ref{eqn:1/TinV}),
\be
\begin{split}
z &= \frac{1}{a+1}\frac{1}{t}
= \frac{1}{a+1}  \hat{v} (1 - \frac{1}{3}(1-\frac{1}{a})  \hat{v} +
\frac{1}{36} (1 -\frac{11}{a} + \frac{1}{a^2}) \hat{v}^2 \\
& + \frac{1}{270} (1 + \frac{24}{a}  -\frac{24}{a^2} -\frac{1}{a^3}) \hat{v}^3 \\
& + \frac{1}{4320} (1 - \frac{22}{a} +\frac{267}{a^2} - \frac{22}{a^3} + \frac{1}{a^4}) \hat{v}^4
+  \cdots)
\end{split}
\ee
is analytic in $v$ near $v=0$,
and so a function analytic in $z$ is also analytic in $v$.
Furthermore,
because
\be
\frac{d z}{d v} = \sqrt{\frac{a}{(a+1)^3}} \neq 0,
\ee
we have
\be
\res_{z=0} f(z) = \res_{v=0} f(z(v)).
\ee

\subsection{The odd part of $\xi_b(v;a)$}

For $b \geq 0$,
\ben
\xi_b(v;a) = (-\frac{1}{v} \frac{d}{dv})^b \frac{t-1}{a+1}
\een
is a memermorphic function in $v$ with a pole of order $2b+1$ at $v= 0$.
Furthermore,
if we write
\be
t = t_o + t_e,
\ee
where $t_o$ and $t_e$ are odd and even functions in $v$ respectively:
\bea
t_o & = & \frac{1}{\hat{v}}
+ \frac{1}{12}(1+\frac{1}{a} + \frac{1}{a^2})\hat{v}
+ \frac{1}{864} (1 + \frac{1}{a} + \frac{1}{a^2} )^2 \hat{v}^3
+ \cdots, \label{eqn:T-odd}  \\
t_e & = & \frac{1}{3} (1- \frac{1}{a})
+ \frac{1}{135} (2 + \frac{3}{a} - \frac{3}{a^2} - \frac{2}{a^3}) \hat{v}^2
+ \cdots,
\eea
then under the repeated action of $\frac{1}{v}\frac{d}{d v}$,
$t_e$ will only contribute nonnegative powers of $v$.
It follows that the odd part of $\hat{\xi}_b$ is completely determined by $t_o$.
In particular for $b > 0$ the principal part of $\hat{\xi}_b$  is completely determined by $t_o$
and $\hat{\xi}_{-b}$ is analytic in $v$.

\subsection{Bouchard-Mari\~no Conjecture in $v$-coordinates}
Note
\be \label{eqn:DerXZ}
\frac{\frac{\pd x_1}{\pd y_1}}{x_1}
= \frac{\frac{\pd x_1}{\pd v_1}}{x_1} \cdot \frac{1}{\frac{\pd z_1}{\pd v_1}}
= -v_1 \frac{1}{\frac{\pd z_1}{\pd v_1}}.
\ee
Now by (\ref{eqn:1/TinV}) one can see that (\ref{eqn:Bou-MarInZ}) is equivalent to
\begin{multline} \label{eqn:Bou-MarInV}
\sum_{b_1, \dots b_n \geq 0} \langle \prod_{i=1}^n \tau_{b_i} \cdot T_g(a) \rangle_g \cdot
\prod_{i=1}^n\xi_{b_i+1}(v_i, a)   \\
=   {1 \over 2} \frac{1}{- \xi_{-1}(v_1;a) + \xi_{-1}(-v_1;a) }
 \Big( a(a+1) \sum_{b,c \geq 0} \langle \tau_b \tau_c \prod_{i=2}^n \tau_{b_i} \cdot T_{g-1}(a)  \rangle_{g-1}  \\
\cdot [\xi_{b+1}(v_1; a) \xi_{c+1}(-v_1);a) + \xi_{b+1}(-v_1); a) \xi_{c+1}(v_1;a) ]
\cdot \prod_{i=2}^n \xi_{b_i+1}(v_i, a)  \\
- \sum_{\substack{g_1+g_2=g\\ A\coprod B = [n]_1}}^{stable}
\sum \langle \tau_b \prod_{i\in A} \tau_{b_i} \cdot T_{g_1}(a)  \rangle_{g_1}
\cdot \sum_{b,c \geq 0} \langle \tau_c \prod_{i\in B} \tau_{b_i} \cdot T_{g_2}(a)  \rangle_{g_2} \\
\cdot [\xi_{b+1}(v_1; a) \xi_{c+1}(-v_1);a) + \xi_{b+1}(-v_1); a) \xi_{c+1}(v_1;a) ]
 \cdot \prod_{i=2}^n \xi_{b_i+1}(v_i, a)  \\
-2  \sum_{i=2}^n  \sum_{c \geq 0} \langle \tau_c \prod_{j\in [n]_{1i}} \tau_{b_j} \cdot T_{g}(a)  \rangle_{g}
\cdot \prod_{j \in [n]_{1i}} \xi_{b_j+1}(v_j; a)   \\
\biggl[
\biggl( \frac{x_1x_i\frac{\pd y}{\pd x}(z_1;a) \frac{\pd y_i}{\pd x_i}}{(y_1-y_i)^2} - \frac{x_1x_i}{(x_1-x_i)^2} \biggr)
\xi_{b+1}(-v_1; a) \\
+ \biggl( \frac{x_1x_i \frac{\pd y}{\pd x}(P(z_1);a) \frac{\pd y_i}{\pd x_i}}{(y(P(z_1);a)-y_i)^2}
- \frac{x_1x_i}{(x_1-x_i)^2} \biggr) \xi_{b+1}(v_1;a) \biggr]   \Big),
\end{multline}
modulo a term with at most a simple pole at $0$ in $v_1$.
This is because in determining the LHS from the RHS,
only terms with degrees $-3$ and lower in $v_1$ are involved.
For  $f(v_1,\dots, v_n)$ write
$$f(v_1, \dots,v_n) ^o = \half (f(v_1, \dots, v_n) -   f(-v_1, \dots, v_n) ).$$
Noticing
$$\biggl( \frac{x_1x_i}{(x_1-x_i)^2} \biggr)^o =0,
$$
we get from (\ref{eqn:Bou-MarInV}):
\begin{multline} \label{eqn:Bou-MarInVOdd}
\sum_{b_1, \dots b_n \geq 0} \langle \prod_{i=1}^n \tau_{b_i} \cdot T_g(a) \rangle_g \cdot
 \xi_{b_i+1}^o(v_i, a) \prod_{i=2}^n \xi_{b_i+1}(v_i, a)  \\
=   {1 \over 2} \frac{1}{\xi_{-1}^o(v_1;a) }
 \Big( a(a+1) \sum_{b,c \geq 0} \langle \tau_b \tau_c \prod_{i=2}^n \tau_{b_i} \cdot T_{g-1}(a)  \rangle_{g-1}  \\
\cdot \xi^o_{b+1}(v_1; a) \xi^o_{c+1}(v_1;a)
\cdot  \prod_{i=2}^n \xi_{b_i+1}(v_i, a)  \\
- \sum_{\substack{g_1+g_2=g\\ A\coprod B = [n]_1}}^{stable}
\sum \langle \tau_b \prod_{i\in A} \tau_{b_i} \cdot T_{g_1}(a)  \rangle_{g_1}
\cdot \sum_{b,c \geq 0} \langle \tau_c \prod_{i\in B} \tau_{b_i} \cdot T_{g_2}(a)  \rangle_{g_2} \\
\cdot \xi_{b+1}^o(v_1; a) \xi_{c+1}^o(v_1;a)
 \cdot \prod_{i=2}^n \xi_{b_i+1}(v_i, a) \\
+ \frac{2}{a(a+1)}  \sum_{i=2}^n  \sum_{c \geq 0} \langle \tau_c \prod_{j\in [n]_{1i}} \tau_{b_j} \cdot T_{g}(a)  \rangle_{g}
\cdot \prod_{j \in [n]_{1i}} \xi_{b_j+1}(v_j; a)   \\
\cdot
\biggl( \frac{x_1x_i\frac{\pd y}{\pd x}(z_1;a) \frac{\pd y_i}{\pd x_i}}{(y_1-y_i)^2} \biggr)^o \xi^o_{b+1}(v; a)    \Big),
\end{multline}
modulo a term with at most a simple pole at $0$ in $v_1$.
This is an equivalent reformulation of the Bouchard-Mari\~no recursion that we will establish in the next section.

\section{Proof of Bouchard-Mari\~no Conjecture}

In this section we derive (\ref{eqn:Bou-MarInVOdd}) from the cut-and-join equation.

\subsection{The cut-and-join equation for triple Hodge integrals of KLMV type}

For a partition $\mu = (\mu_1, \dots, \mu_{l(\mu)})$ of $d>0$,
consider the triple Hodge integral:
\begin{eqnarray*}
\cC_{g, \mu}(a)& = & - \frac{\sqrt{-1}^{|\mu|+l(\mu)}}{|\Aut(\mu)|}
 (a(a+1))^{l(\mu)-1}
\prod_{i=1}^{l(\mu)}\frac{ \prod_{a=1}^{\mu_i-1} (\mu_i a+a)}{(\mu_i-1)!} \\
&& \cdot \int_{\Mbar_{g, l(\mu)}}
\frac{\Lambda^{\vee}_g(1)\Lambda^{\vee}_g(-a-1)\Lambda_g^{\vee}(a)}
{\prod_{i=1}^{l(\mu)}(1- \mu_i \psi_i)},
\een
and their generating series
\ben
\cC_{\mu}(\lambda; a) & = & \sum_{g \geq 0} \lambda^{2g-2+l(\mu)}\cC_{g, \mu}(a)
\end{eqnarray*}
and\begin{eqnarray*}
\cC(\lambda; a; p) & = & \sum_{|\mu| \geq 1} \cC_{\mu}(\lambda;a)p_{\mu},
\end{eqnarray*}
where $p_\mu = \prod_{i=1}^{l(\mu)} p_{\mu_i}$.
They satisfy the following cut-and-join equation \cite{Zho1, LLZ}:
\begin{eqnarray*}
 \frac{\partial \cC}{\partial a}
= \frac{\sqrt{-1}\lambda}{2} \sum_{i, j\geq 1} \left(ijp_{i+j}
\frac{\partial^2\cC}{\partial p_i\partial p_j}
+ ijp_{i+j}\frac{\partial \cC}{\partial p_i}\frac{\partial\cC}{\partial p_j}
+ (i+j)p_ip_j\frac{\partial \cC}{\partial p_{i+j}}\right). \label{eqn:CutJoin}
\end{eqnarray*}

\subsection{Symmetrization}
One can also define
\ben
&& \cC_g(p;a)
= \sum_{\mu} \cC_{g, \mu}(a) p_{\mu}.
\een
Because $\cC_g(a;p)$ is a formal power series in $p_1, p_2, \dots, p_n, \dots$,
for each $n$,
one can obtain from it a formal power series $\Phi_{g,n}(x_1, \dots, x_n; a)$
by applying the following linear symmetrization operator \cite{Gou-Jac-Vai, Che}:
$$p_{\mu} \mapsto (\sqrt{-1})^{-(n+|\mu|)}
\delta_{l(\mu), n}\sum_{\sigma \in S_n}
x_{\sigma(1)}^{\mu_1} \cdots x_{\sigma(n)}^{\mu_n}.$$
From the definition,
we have for $2g-2+n > 0$,
\ben
&& \Phi_{g,n}(a; x_1, \dots, x_n) \\
& = &  -(a(a+1))^{n-1} \sum_{b_1, \dots, b_n \geq 0}
\cor{\tau_{b_1} \cdots \tau_{b_n} T_g(a)}_g
\prod_{i=1}^n \phi_{b_i}(x_i; a),
\een
where
\ben
&& \cor{\tau_{b_1} \cdots \tau_{b_n} T_g(a)}_g
= \int_{\Mbar_{g,n}} \prod_{i=1}^n \psi_i^{b_i} \cdot
\Lambda_g^{\vee}(1)\Lambda_g^{\vee}(a)\Lambda_g^{\vee}(-1-a), \\
&& \phi_b(x;a) = \frac{1} {a}\sum_{m\geq 1}\frac{\prod_{j=0}^{m-1}(ma+j)}{m!} m^b x^m.
\een
We have two exceptional cases.
For $(g,n) = (0,1)$,
\be \label{eqn:Phi01}
\Phi_{0,1}(x_1;a) = - \sum_{m=1}^{\infty} \frac{\prod_{j=1}^{m-1} (m a +j)}{(m-1)!} m^{-2} x_1^m
= - \phi_{-2}(x_1;a);
\ee
for $(g, n) = (0, 2)$,
\be \label{eqn:Phi02}
\Phi_{0,2}(x_1, x_2; a)
= -a(a+1) \sum_{m_1, m_2 \geq 1} \prod_{i=1}^2 \frac{\prod_{j=1}^{m_i-1} (m_ia+j)}{(m_i-1)!}
\cdot \frac{x_1^{m_1}x_2^{m_2}}{m_1+m_2}.
\ee

By the analysis in \cite{Gou-Jac-Vai, Che},
the symmetrized cut-and-join equation of triple Hodge integrals of KLMV type is
\be \label{eqn:CJinX}
\begin{split}
&  \frac{\pd}{\pd a} \Phi_{g,n}(x_{[n]}; a) \\
=&-\frac{1}{2}\sum_{i=1}^n z_1 \frac{\pd}{\pd z_1}
z_2 \frac{\pd}{\pd z_2}\Phi_{g-1,n+1}(z_1, z_2,x_{[n]_i}; a)|_{z_1, z_2=x_i}\\
-& \frac{1}{2} \sum_{i=1}^n \sum_{\substack{g_1+g_2 = g \\ A \coprod B = [n]_i} }^{stable}
x_i\frac{\pd}{\pd x_i} \Phi_{g_1, |A|+1}(x_i, x_A; a) \\
& \cdot
x_i \frac{\pd}{\pd x_i} \Phi_{g_2, |B|+1}(x_i, x_B; a) \\
-& \sum_{i=1}^n
x_i\frac{\pd}{\pd x_i} \Phi_{0, 1}(x_i; a) \cdot
x_i \frac{\pd}{\pd x_i} \Phi_{g, n}(x_{[n]}; a) \\
-& \sum_{i=1}^n \sum_{j \in [n]_i}
(x_i\frac{\pd}{\pd x_i} \Phi_{0, 2}(x_i, x_j; a) - \frac{x_j}{x_i-x_j}) \cdot
x_i \frac{\pd}{\pd x_i} \Phi_{g, n-1}(x_{[n]_j}; a).
\end{split}
\ee
It is clear that
\be \label{eqn:W=dPhi}
W_g(y_1, \dots, y_n;a)=(-1)^{g+n-1}  \prod_{i=1}^n \frac{\pd}{\pd x_i} \Phi_{g,n}(x, x_{[n]}; a)
\cdot \prod_{i=1}^n dx_i.
\ee
So it is natural to derive Bouchard-Mari\~no recursion from the symmetrized cut-and-join equation.

\subsection{The initial values}
By (\ref{eqn:Phi01}),
we have
\be
x \frac{\pd}{\pd x} \Phi_{0,1}(x;a)
= - \phi_{-1}(x;a) = - \xi_{-1}(v;a) = - \ln y.
\ee
From this one then verifies (\ref{eqn:W0y}).
We also need to find similar expressions for $\Phi_{0,2}(x_1, x_2;a)$. By (\ref{eqn:Phi02}),
\begin{multline} \label{eqn:EqnPhi(02)}
\big(x_1\frac{\pd}{\pd x_1} + x_2\frac{\pd}{\pd x_2}\big) \Phi_{0,2}(x_1, x_2;a)
= -a (a+1) \frac{t_1-1}{a+1} \cdot \frac{t_2-1}{a+1} \\
= - a(a+1) \frac{y_1-1}{(a+1) y_1 - a} \cdot \frac{y_2-1}{(a+1) y_2 - a}.
\end{multline}
Note if $A(x_1, x_2) = \sum_{m_1, m_2 \geq 1} A_{m_1,m_2} x_1^{m_1}x_2^{m_2}$
satisfies
\be
(x_1\frac{\pd}{\pd x_1} + x_2\frac{\pd}{\pd x_2}) A(x_1,x_2)
=  \sum_{m_1, m_2 \geq 1} B_{m_1,m_2} x_1^{m_1}x_2^{m_2},
\ee
then $A(x_1, x_2)$ is uniquely determined by
\be
A_{m_1,m_2} = \frac{1}{m_1+m_2} B_{m_1, m_2}.
\ee
Therefore,
one can verify that:
\be
\Phi_{0,2}(x_1, x_2; a)=-\ln(\frac{y_2-y_1}{x_1-x_2})
+ \ln \frac{1-y_1}{x_1}+\ln \frac{1-y_2}{x_2}.
\ee
Indeed,
differentiating this equation one gets:
\be \label{eqn:x1DPhi(02)}
x_1\frac{\pd}{\pd x_1} \Phi_{0,2}(x_1,x_2; a)
=-\frac{1}{y_1-y_2} x_1 \frac{\pd y_1}{\pd x_1} + \frac{x_2}{x_1-x_2}
+ \frac{1}{y_1-1} x_1\frac{\pd y_1}{\pd x_1}.
\ee
One can use (\ref{eqn:Operator}) to get
$$x_1 \frac{\pd y_1}{\pd x_1} = \frac{y_1^2-y_1}{(a+1)y_1-a},$$
and so
\ben
x_1\frac{\pd}{\pd x_1} \Phi_{0,2}(x_1,x_2;a)
= -(\frac{1}{y_1-y_2} - \frac{1}{y_1-1} )\frac{y_1^2-y_1}{(a+1)y_1-a}+\frac{x_2}{x_1-x_2}.
\een
One gets $x_2\frac{\pd}{\pd x_2} \Phi_{0,2}(x_1,x_2;\tau)$ by switching $1$ and $2$,
then (\ref{eqn:EqnPhi(02)}) can be checked.

One can also take $x_2\frac{\pd}{\pd x_2}$ on both sides of (\ref{eqn:x1DPhi(02)}) to get:
\be
\frac{\pd}{\pd x_1} \frac{\pd}{\pd x_2} \Phi_{0,2}(x_1,x_2;a)
=-\frac{1}{(y_1-y_2)^2} \frac{\pd y_1}{\pd x_1} \frac{\pd y_2}{\pd x_2} + \frac{1}{(x_1-x_2)^2}.
\ee
By this one verifies (\ref{eqn:W0y1y2}).

\subsection{Symmetrized cut-and-join equation in the $v$-coordinates}

Recall for $2g-2+n> 0$,
\be
\Phi_{g,n}(x_{[n]};a) =   -(a(a+1))^{n-1} \sum_{b_i \geq 0}
\cor{\prod_{i=1}^n \tau_{b_i} \cdot T_g(a)}_g
\prod_{i=1}^n \xi_{b_i}(v_i; a).
\ee
It is straightforward to check that
\be \label{eqn:OperatorsInXWV}
x \frac{\pd}{\pd x} = - \frac{\pd}{\pd w} = - \frac{1}{v} \frac{\pd}{\pd v}.
\ee
Therefore,
\be
\begin{split}
& x_j \frac{\pd}{\pd x_j} \Phi_{g,n}(x_{[n]};a)
= - \frac{1}{v_j} \frac{\pd}{\pd v_j}\Phi_{g,n}(x_{[n]};a) \\
= & (a(a+1))^{n-1} \sum_{b_1, \dots, b_n \geq 0}
\cor{\prod_{i=1}^n \tau_{b_i} \cdot T_g(a)}_g
\prod_{i=1}^n \xi_{b_i+\delta_{ij}}(v_i; a).
\end{split}
\ee

By (\ref{eqn:x1DPhi(02)}),
\be
x_1 \frac{\pd}{\pd x_1} \Phi_{0,2}(x_1, x_2;a) - \frac{x_2}{x_1-x_2}
= \big(\frac{1}{y_1-1} -\frac{1}{y_1-y_2} \big) x_1\frac{\pd y_1}{\pd x_1}.
\ee

Now the symmetrized cut-and-join equation (\ref{eqn:CJinX}) can be written as:
\ben
&& -\frac{\pd}{\pd a} \biggl( (a(a+1))^{n-1} \sum_{b_i \geq 0}
\cor{\prod_{i=1}^n \tau_{b_i} \cdot T_g(a)}_g
\prod_{i=1}^n \xi_{b_i}(v_i; a) \biggr) \\
& = & \half \sum_{i=1}^n (a(a+1))^{n} \sum_{b,c, b_i \geq 0}
\cor{\tau_b\tau_c \prod_{j \in [n]_i} \tau_{b_j} \cdot T_{g-1}(a)}_{g-1} \xi_{b+1}(v_i) \xi_{c+1}(v_i)
\prod_{j \in [n]_i} \xi_{b_j}(v_j; a) \\
& - & \frac{1}{2} \sum_{i=1}^n \sum_{\substack{g_1+g_2 = g \\ A \coprod B = [n]_i} }^{stable}
(a(a+1))^{|A|} \cor{\tau_b \prod_{j \in A} \tau_{b_j} \cdot T_{g_1}(a)}_{g_1} \xi_{b+1}(v)
\prod_{j \in A} \xi_{b_j}(v_j; a) \\
&& \cdot
(a(a+1))^{|B|} \cor{\tau_c \prod_{j \in B} \tau_{b_j} \cdot T_{g_2}(a)}_{g_2} \xi_{c+1}(v)
\prod_{j \in B} \xi_{b_j}(v_j; a) \\
& - & \sum_{i=1}^n \xi_{-1}(v_i; a) \cdot  (a(a+1))^{n-1} \sum_{b_j \geq 0}
\cor{\prod_{j=1}^n \tau_{b_j} \cdot T_g(a)}_g
\prod_{j=1}^n \xi_{b_j+\delta_{ij}}(v_j; a) \\
& + & \sum_{i=1}^n \sum_{j \in [n]_i}
\big(\frac{1}{y_i-1} -\frac{1}{y_i-y_j} \big) x_i \frac{\pd y_i}{\pd x_i} \\
&& \cdot (a(a+1))^{n-2}
 \sum_{b_1, \dots, \hat{b}_j, \dots, b_n \geq 0}
\cor{\prod_{k \in [n]_j} \tau_{b_k} \cdot T_g(a)}_g
 \xi_{b_i+1}(v_i; a) \prod_{k \in [n]_{ij}} \xi_{b_k}(v_k; a).
\een
We regard both sides of this equation as meromorphic functions in $v_1$,
take the principal parts and then take only the even powers in $v_1$.
The left-hand side has no contribution,
so we get:
\ben
&&  \xi^o_{-1}(v_1; a) \cdot  \sum_{b_j \geq 0}
\cor{\prod_{j=1}^n \tau_{b_j} \cdot T_g(a)}_g
\xi_{b_1+1}^o(v_1;a) \prod_{j=2}^n \xi_{b_j}(v_j; a) \\
& = & \half a(a+1) \sum_{b,c, b_i \geq 0}
\cor{\tau_b\tau_c \prod_{j \in [n]_1} \tau_{b_j} \cdot T_{g-1}(a)}_{g-1} \xi^o_{b+1}(v_1) \xi^o_{c+1}(v_1)
\prod_{j \in [n]_1} \xi_{b_j}(v_j; a) \\
& - & \frac{1}{2} \sum_{\substack{g_1+g_2 = g \\ A \coprod B = [n]_i} }^{stable}
 \cor{\tau_b \prod_{j \in A} \tau_{b_j} \cdot T_{g_1}(a)}_{g_1}
\cor{\tau_c \prod_{j \in B} \tau_{b_j} \cdot T_{g_2}(a)}_{g_2} \\
&& \cdot \xi_{b+1}^o(v_1) \xi_{c+1}^o(v_1) \prod_{j \in [n]_1} \xi_{b_j}(v_j; a) \\
& + & \frac{1}{a(a+1)} \sum_{j \in [n]_1}
\biggl[ \big(\frac{1}{y_1-1} -\frac{1}{y_1-y_j} \big) x_1 \frac{\pd y_1}{\pd x_1} \biggr]^o \\
&& \cdot
 \sum_{b_1, \dots, \hat{b}_j, \dots, b_n \geq 0}
\cor{\prod_{k \in [n]_j} \tau_{b_k} \cdot T_g(a)}_g
 \xi^o_{b_1+1}(v_1; a) \prod_{k \in [n]_{1j}} \xi_{b_k}(v_k; a)
\een
modulo terms analytic in $v_1$.
One gets (\ref{eqn:Bou-MarInVOdd}) by taking $\prod_{i=2}^n x_i \frac{\pd}{\pd x_i}$
on both sides then dividing both sides by $\xi_{-1}^o(v;a)$
of this equation.
This completes the proof of Theorem 1.

\vspace{.1in}
{\em Acknowledgements}.
The author thanks Professor Kefeng Liu for bringing \cite{Bou-Mar} and \cite{Che} to his attentions.
This research is partially supported by two NSFC grants (10425101 and 10631050)
and a 973 project grant NKBRPC (2006cB805905).


\begin{thebibliography}{999}

\bibitem{AKMV}
M. Aganagic, A. Klemm, M. Mari\~no, C. Vafa,
{\em The topological vertex},
Commun. Math. Phys. {\bf 254} (2005), 425-478, arXiv:hep-th/0305132.

\bibitem{BCOV}
M. Bershadsky, S. Cecotti, H. Ooguri, C. Vafa,
{\em Kodaira-Spencer theory of gravity
and exact results for quantum string amplitudes},
Commun. Math. Phys. {\bf 165} (1994), 311-427, arXiv:hep-th/9309140.

\bibitem{BEMS} G.~Borot, B.~Eynard,
M.~Mulase,B.~Safnuk, {Hurwitz numbers, matrix models and topological recursion},
arXiv:0906.1206.

\bibitem{BKMP}
V. Boucharda, A. Klemmb, Marcos Mari\~no,  S. Pasquetti,
{\em Remodeling the B-model},
arXiv:0709.1453.

\bibitem{Bou-Mar}
V. Bouchard, M. Mari\~no,
{\em Hurwitz numbers, matrix models and enumerative geometry},
arXiv:0709.1458.

\bibitem{Che}
L. Chen,
{\em  Symmetrized cut-join equation of Marino-Vafa formula},
arXiv:0709.1738.

\bibitem{Che2}
L. Chen,
{\em Bouchard-Klemm-Mari\~no-Pasquetti Conjecture for $\bC^3$},
arXiv:0910.3739.

\bibitem{ELSV} T. Ekedahl, S. Lando, M. Shapiro, A. Vainshtein.
{\em Hurwitz numbers and intersections on moduli spaces of curves},
Invent. Math. {\bf 146} (2001), 297-327.

\bibitem{EMS}
B. Eynard, M. Mulase, B. Safnuk
{\em The Laplace transform of the cut-and-join equation and the Bouchard-Marino conjecture on Hurwitz numbers},
arXiv:0907.5224.


\bibitem{EO}
B. Eynard, N. Orantin,
{\em Invariants of algebraic curves and topological expansion},
arXiv:math-ph/0702045.

\bibitem{Gou-Jac-Vai} I.P. Goulden, D. M. Jackson,A. Vainshtein,
{\em The number of ramified coverings of the sphere by the torus and surfaces
of higher genera}, Ann. Combinatorics {\bf 4} (2000), 27-46.

\bibitem{Kat-Liu}
S.~Katz, C.-C.~Liu,
{\em Enumerative geometry of stable maps with Lagrangian boundary condtions and multiple covers
of the disc},
Adv. Theor. Math. Phys. {\bf 5} (2001), 1-49.

\bibitem{Li-Zha-Zhe} A.M. Li, G. Zhao, Q. Zheng,
{\em The number of ramifed coverings of a Riemann surface by Riemann surface},
Comm.Math.Phys. {\bf 213} (2000), 685-696.

\bibitem{LLZ}  C.-C. Liu, K. Liu, J. Zhou,
{\em A proof of a conjecture of Mari\~no-Vafa on Hodge Integrals}, J. Differential Geom. {\bf 65} (2003),
289-340.

\bibitem{LLZ2} C.-C. Liu, K. Liu, J. Zhou, {\em A formula of two-partition Hodge integrals},
J. Amer. Math. Soc. {\bf 20} (2007), no. 1, 149-184.

\bibitem{LLLZ}
J. Li, C.-C. Liu, K. Liu, J. Zhou,
{\em A mathematical theory of the topological vertex},
arXiv:math/0408426.

\bibitem{Mar}
M. Mari\~no,
{\em Open string amplitudes and large order behavior in topological string theory},
arXiv:hep-th/0612127.

\bibitem{Mar-Vaf}
M. Mari\~no, C. Vafa,
{\em Framed knots at large N}, Orbifolds in mathematics and physics (Madison, WI,
2001), 185-204, Contemp. Math., 310, Amer. Math. Soc., Providence, RI, 2002.

\bibitem{Oko-Pan} A. Okounkov, R. Pandharipande,
{\em Hodge integrals and invariants of the unknot},
Geometry $\&$ Topology, Vol. {\bf  8} (2004), Paper no. 17, 675-699.

\bibitem{Zho1} J. Zhou, {\em Hodge integrals, Hurwitz numbers, and symmetric groups}, math.AG/0308024.

\bibitem{Zho2} J. Zhou, {\em   A conjecture on Hodge integrals}, arXiv:math/0310282.

\bibitem{Zho3} J. Zhou, {\em  Local mirror symmetry for the topological vertex},
in preparation.

\end{thebibliography}
\end{document}